\documentclass[10pt]{article}
\usepackage{amssymb,amsmath,amsthm,bm}
\usepackage[colorlinks=true, urlcolor=blue,linkcolor=blue, citecolor=blue]{hyperref}
\usepackage{caption}
\usepackage{graphicx, float}
\usepackage{color, mathtools, tikz, xcolor}
\usepackage{enumerate}
\usepackage[labelformat=simple]{subcaption}

\usetikzlibrary{decorations.pathreplacing,arrows,calc}
\usetikzlibrary{shapes.geometric}
\usetikzlibrary{decorations.markings}
\usepackage{bbm}

\usepackage[UKenglish]{babel}
\usepackage[UKenglish]{isodate}

\usepackage{geometry}
\numberwithin{equation}{section}

\numberwithin{figure}{section}
	\newtheorem{theorem}{Theorem}[section]
	
	\newtheorem{conjecture}[theorem]{Conjecture}
	
	\newtheorem{claim}[theorem]{Claim}

	\newtheorem{fact}[theorem]{Fact}

\date{\today}

\begin{document}

%\linenumbers
\title{Vertex-distinguishing  chromatic index of digraphs}

\author{Yuping Gao\footnote{School of Mathematics and Statistics, Lanzhou University, Lanzhou 730000, China.
			Email: {\tt gaoyp@lzu.edu.cn}.}
		\qquad
		Zijun Qin\footnote{School of Mathematics and Statistics, Lanzhou University, Lanzhou 730000, China. Email: {\tt 1802127189@qq.com}.}
		\qquad
		Songling Shan\footnote{Department of Mathematics and Statistics, Auburn University, Auburn, AL 36849, USA.
			Email:	{\tt szs0398@auburn.edu}.
		}
	}

\maketitle
\begin{abstract}
	
	Let $D$ be a digraph. In this note, an \emph{arc coloring} of $D$ is an assignment of colors to the arcs of $D$ such that no two arcs with a common tail receive the same color and no two arcs with a common head receive the same color.
	Under such a coloring, each vertex $v$ is associated with an \emph{out-color set} and an \emph{in-color set}, consisting of the colors assigned to the arcs with tail $v$ and to the arcs with head $v$, respectively. An arc coloring of $D$ is \emph{vertex-distinguishing} if any two distinct vertices have different out-color sets and different in-color sets.
	The minimum number of colors required for a vertex-distinguishing arc coloring of $D$ is called the \emph{vertex-distinguishing chromatic index} of $D$, denoted  $\chi_{vd}^{\prime}(D)$. In 2016, Li, Bai, He, and Sun conjectured that
	$\chi_{vd}^{\prime}(D)=k(D)$ for any digraph $D$  with at most one source and at most one sink,  where $k(D)$ is a natural lower bound determined by the outdegree and indegree sequences of $D$. We confirm this conjecture.

\medskip

\noindent {\textbf{Keywords}:  Vertex-distinguishing arc coloring; vertex-distinguishing chromatic index; Kempe chain}
\end{abstract}

\section{Introduction}

All graphs and digraphs considered in this paper are finite and simple. For two integers $p$ and $q$, let $[p,q]=\{i\in \mathbb{Z}\colon p\leq i\leq q\}$. In particular, we denote $[1,q]$ by $[q]$.
Given an integer $k\in \mathbb{N}$, an \emph{edge-$k$-coloring} of a graph $G$ is an assignment of colors from $[k]$ to the set of edges of $G$ such that no two adjacent edges receive the same color.
The \emph{chromatic index} of $G$, denoted  $\chi'(G)$, is the smallest integer $k$ for which $G$ admits an edge-$k$-coloring.
In the 1960s, Gupta~\cite{Gupta-67} and Vizing~\cite{Vizing-2-classes} independently proved the fundamental result that $\Delta(G) \le \chi'(G) \le \Delta(G)+1$.

For an \emph{edge-$k$-coloring} $\varphi$ of $G$ and $v\in V(G)$,
let $\varphi(v)=\{\varphi(e)\colon  \text{$e$ is  incident with $v$ in $G$}\}$ be the set of colors assigned to edges incident with $v$, and we call it the \emph{color-set} of $v$.
If $\varphi(u) \ne \varphi(v)$ for any two distinct vertices $u,v$ of $G$, then   $\varphi$  is called a \emph{vertex-distinguishing edge-$k$-coloring}, or a \emph{$k$-vdec} for short.   It is clear that $G$ has a $k$-vdec for some integer $k$ if and only if \emph{$G$  is vdec}, that is, $G$ has at most one isolated vertex
and  has no isolated edge.  The smallest integer $k$
for which  a vdec graph has a $k$-vdec is called  the \emph{vertex-distinguishing chromatic index} of $G$, and is denoted
by $\chi'_{vd}(G)$.
For $d\in [\delta(G), \Delta(G)]$, let $n_d$ be the total number of
vertices of degree $d$ in $G$, and let $k(G) =\min\{k\in \mathbb{N}\colon  {k \choose d} \ge n_d, d\in [\delta(G), \Delta(G)]\}$.
Then $k(G)$ is a natural lower bound on $\chi'_{vd}(G)$.
Burris and Schelp~\cite{BS1997} proposed a conjecture which is
a vertex-distinguishing analogue of the bound on the chromatic index  of a graph  by  Gupta~\cite{Gupta-67}  and  Vizing~\cite{Vizing-2-classes}.

\begin{conjecture}[Burris and Schelp~\cite{BS1997}]\label{conj:VDEC}
Let $G$ be a vdec graph. Then $\chi'_{vd}(G)\in[k(G), k(G)+1]$.
\end{conjecture}

Although Conjecture~\ref{conj:VDEC} remains open, Li, Bai, He, and Sun~\cite{LBHS2016} extended the study of vertex-distinguishing edge colorings from undirected graphs to digraphs and proposed an analogous conjecture in 2016. We first introduce the corresponding concepts for digraphs.

 Let $D=(V(D),A(D))$ be a digraph, and let $k\in\mathbb{N}$. An \emph{arc-$k$-coloring} of $D$ is a mapping $\varphi\colon A(D)\to[k]$ such that no two arcs with a common tail receive the same color and no two arcs with a common head receive the same color. Zwonek~\cite{Zwonek2006} calls such a coloring an arc coloring of the \emph{second type}. In contrast, an arc coloring of the first type requires any two arcs of the form $uv$ and $vw$ to receive distinct colors. The minimum number of colors required for an arc coloring of the second type of $D$ is denoted by $\chi'(D)$.

 For each $v\in V(D)$, let $\varphi^+(v)=\{\varphi(vw)\colon vw\in A(D)\}$ and $\varphi^-(v)=\{\varphi(wv)\colon wv\in A(D)\}$. These sets are called the \emph{out-color set} and the \emph{in-color set} of $v$, respectively. An arc coloring $\varphi$ is \emph{vertex-distinguishing} if $\varphi^+(u)\neq\varphi^+(v)$ and $\varphi^-(u)\neq\varphi^-(v)$ for every pair of distinct vertices $u,v\in V(D)$. A vertex-distinguishing arc-$k$-coloring is abbreviated as a $k$-\emph{vdac}. The minimum integer $k$ for which $D$ admits a $k$-vdac is called the \emph{vertex-distinguishing chromatic index} of $D$ and is denoted by $\chi'_{vd}(D)$.

 A \emph{source} of $D$ is a vertex with no in-arcs, while a \emph{sink} is a vertex with no out-arcs. If $D$ has two sources, then their in-color sets are both empty, and if $D$ has two sinks, then their out-color sets are both empty. Thus, a digraph admitting a vertex-distinguishing arc coloring has at most one source and at most one sink. Conversely, if $D$ has at most one source and at most one sink, then assigning a distinct color to every arc yields a vertex-distinguishing arc coloring. Therefore, $D$ admits a vertex-distinguishing arc coloring if and only if it has at most one source and at most one sink. We call such a digraph a \emph{vdac-digraph}.

For each $v\in V(D)$, let $d^+(v)$ and $d^-(v)$ denote the outdegree and indegree of $v$, respectively. Denote by $\delta^+(D)$, $\delta^-(D)$, $\Delta^+(D)$, and $\Delta^-(D)$ the minimum outdegree, minimum indegree, maximum outdegree, and maximum indegree of $D$, respectively, and let $\Delta(D)=\max\{\Delta^+(D),\Delta^-(D)\}$. For each nonnegative integer $d$, let $n_d^+(D)$ and $n_d^-(D)$ denote the numbers of vertices of outdegree $d$ and indegree $d$ in $D$, respectively.
 In analogy with $k(G)$, define
\[
k(D)=\min\left\{k\in\mathbb{N}\colon
\binom{k}{d}\geq n_d^+(D)\text{ for }d\in[\delta^+(D),\Delta^+(D)]
\text{ and }
\binom{k}{d}\geq n_d^-(D)\text{ for }d\in[\delta^-(D),\Delta^-(D)]
\right\}.
\]
It is easy to see that $\chi'_{vd}(D)\geq k(D)$. In contrast to the undirected setting, where the additional ``$+1$'' in Conjecture~\ref{conj:VDEC} is necessary for some graphs, Li, Bai, He, and Sun~\cite{LBHS2016} conjectured that the lower bound $k(D)$ is always attained for digraphs.

\begin{conjecture}[Li, Bai, He, and Sun~\cite{LBHS2016}]
	\label{conj:VDAC}
	Let $D$ be a vdac-digraph. Then $\chi'_{vd}(D)=k(D)$.
\end{conjecture}

In this paper, we confirm Conjecture~\ref{conj:VDAC}.

\begin{theorem}\label{thm:main}
	Let $D$ be a vdac-digraph. Then $\chi'_{vd}(D)=k(D)$.
\end{theorem}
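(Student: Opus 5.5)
Our plan is to translate the problem into one about bipartite graphs and then use the fact that bipartite graphs are class one, together with Kempe chain recolorings, to make all color sets distinct. Let $k=k(D)$. Form the bipartite graph $B$ with parts $V^+=\{v^+\colon v\in V(D)\}$ and $V^-=\{v^-\colon v\in V(D)\}$, with an edge $u^+w^-$ for each arc $uw$. An arc-$k$-coloring of $D$ is exactly a proper edge-$k$-coloring of $B$. The out-color set of $v$ is the color set of $v^+$, and the in-color set of $v$ is the color set of $v^-$. So we need a proper edge-$k$-coloring of $B$ in which the vertices within $V^+$ have pairwise distinct color sets, and likewise within $V^-$. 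Vertices in different parts never need to be distinguished.

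The definition of $k$ gives $\binom{k}{d}\ge n_d^{\pm}$. In particular, if some degree $d\ge1$ occurs then $k\ge d$, and since a source or sink is unique, $n_0^{\pm}\le 1$. Hence $k\ge\Delta(B)$, and by K\H{o}nig's theorem $B$ has a proper edge-$k$-coloring. Among all proper edge-$k$-colorings of $B$, choose $\varphi$ minimizing the number of \emph{conflicts}, that is, unordered pairs of vertices on the same side with equal color sets. We argue that this minimum is zero.

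Suppose $x^+,y^+\in V^+$ have $\varphi(x^+)=\varphi(y^+)=S$ with $|S|=d$. Since $\binom{k}{d}\ge n_d^+$ and $S$ is shared, some $d$-subset $T\subseteq[k]$ is used by no vertex of degree $d$ in $V^+$. We aim to alter the color set of $x^+$ (or of $y^+$) toward $T$ via a Kempe swap. Pick $\alpha\in S\setminus T$ and $\beta\in T\setminus S$. The $(\alpha,\beta)$-Kempe chain starting at $x^+$ is a path $P$, since $\beta$ is missing at $x^+$. Swapping colors on $P$ changes $\varphi(x^+)$ to $S-\alpha+\beta$, and it changes color sets only at the other endpoint of $P$. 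Because $B$ is bipartite and $x^+,y^+$ lie on the same side, the chains $P_x$ from $x^+$ and $P_y$ from $y^+$ cannot both end at each other in an even/odd-incompatible way. One of them therefore has an endpoint that is either in $V^-$ (where its change only affects $V^-$-conflicts) or in $V^+$. In the same-side case the endpoint $z^+$ has $\varphi(z^+)$ containing exactly one of $\alpha,\beta$, and its set moves to a different set of the same size.

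Iterating this, we move the set of $x^+$ one element at a time from $S$ to $T$, and we use a potential argument to show the total conflict count strictly decreases. The natural potential is lexicographic: first the number of conflicts, then a distance such as $|\varphi(x^+)\triangle T|$. \textbf{The main obstacle} is controlling the side effect at the far endpoint of each Kempe chain. Such a swap can create a new conflict on either side, or destroy the freshness of $T$.

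To handle this, we would first choose $\alpha,\beta$ and between $x^+$ and $y^+$ so that the far endpoint is a vertex whose new set is not used by another vertex of its degree. The counting slack from $\binom{k}{d}\ge n_d$ ensures there are many choices of $(T,\alpha,\beta)$. Failing that, we fall back on an exchange argument: if every choice creates a new conflict, double counting over all pairs $(\alpha,\beta)$ yields more vertices of degree $d$ than $\binom{k}{d}$, contradicting the definition of $k$. We expect this counting/exchange lemma to be the technical heart of the proof. We also expect degrees $d=0$ and $d=k$ to need separate, easy treatment: for these $\binom{k}{d}=1$, so such a vertex is unique and is automatically distinguished.
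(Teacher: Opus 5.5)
Your reduction to the bipartite graph $B$ matches the paper, and so does your potential: minimizing $\sum_S\binom{n_S}{2}$ over same-side conflicts is equivalent to minimizing $\Phi(f)=\sum_S\big((n_S^X)^2+(n_S^Y)^2\big)$, because $\sum_S n_S$ is fixed. The proposal is not a proof, however. The gap is exactly the step you flag as the ``main obstacle'': you never exhibit a recoloring that decreases, or at least does not increase, the potential while making progress.

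Your fallback counting lemma cannot work as stated. The far endpoint $z$ of the $(\alpha,\beta)$-chain may lie in $V^-$ or have degree other than $d$, so its new conflicts say nothing about $n_d^+$. Moving $z$ from $R$ to $R^*$ changes the conflict count by $n_{R^*}-n_R+1$, which may be positive for every available choice of $(T,\alpha,\beta)$ and of $x^+$ versus $y^+$. Your plan to walk $\varphi(x^+)$ one element at a time from $S$ to $T$ also fails: once an intermediate set $S-\alpha+\beta$ is already occupied, the first coordinate of your lexicographic potential increases. Your remark that the chain from $x^+$ cannot end at $y^+$ is correct, but it is only a special case of the parity fact and does not by itself control anything.

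The paper supplies two ideas that are missing here.

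First, it does not use single Kempe chains. It builds an auxiliary graph $H(f,\alpha,\beta)$ containing one edge per $(\alpha,\beta)$-path, joining that path's ends. It adds, for each orbit $\{R,R^*\}$ in each part, a maximum matching between the vertices with set $R$ and those with set $R^*$. Exchanging along a path component of $H$ swaps the sets of matched pairs, with no net change, and moves only the two unmatched endpoints. Each endpoint lies in the majority class of its pair, so its move never increases $\Phi$. This gives the balance property $|n_R-n_{R^*}|\le 1$ in every optimal coloring. It also shows that the exchange starting from a class with surplus exactly one preserves optimality.

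Second, the paper moves the \emph{hole} toward the bad set, rather than moving a bad vertex toward the hole. Over all optimal colorings, it picks a bad set $S$ and a hole $T$ of equal size at minimum Johnson distance $\ell\ge2$. The neighbour $Q_1$ of $T$ on a shortest path has multiplicity exactly one: it is not a hole by minimality, and balance bounds it by one. Exchanging from its unique vertex makes $Q_1$ a hole closer to $S$. This contradicts minimality unless the far endpoint is an $S$-vertex in the same part. The bipartite parity claim rules that out, because $Q_1$ and $S$ both meet $\{\alpha,\beta\}$ in $\{\beta\}$, whereas same-part endpoints must differ there.
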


\section{Proof of Theorem~\ref{thm:main}}\label{sec:proof}

Let $G$ be a graph, let $k\in\mathbb{N}$, and let
$\varphi$  be  an edge-$k$-coloring of $G$.
For two distinct colors $\alpha,\beta \in [k]$, the components of the subgraph induced by edges with colors $\alpha$ or $\beta$ are called $(\alpha, \beta)$-chains. Clearly, each $(\alpha, \beta)$-chain is either a path or an even cycle.    If the chain is
a path, we also call it  an  \emph{$(\alpha, \beta)$-path}.
If we interchange the colors $\alpha$ and $\beta$
on an $(\alpha,\beta)$-chain  of $G$, we get a new edge-$k$-coloring  of $G$. This operation is a \emph{Kempe change}.

We write $G[X,Y]$ for a bipartite graph $G$ with bipartition $(X,Y)$. To translate the arc coloring problem into an edge-coloring setting, we associate with each digraph $D$ a \emph{balanced bipartite representation} $G_D[X,Y]$, where $X=\{x_v\colon v\in V(D)\}$ and $Y=\{y_v\colon v\in V(D)\}$ are two disjoint copies of $V(D)$, and $x_u y_v\in E(G_D)$ if and only if $uv\in A(D)$.

For a bipartite graph $G[X,Y]$, let $\delta_X(G)$ and $\Delta_X(G)$ denote the minimum and maximum degrees among the vertices in $X$, respectively, and define $\delta_Y(G)$ and $\Delta_Y(G)$ analogously. Let $\Delta(G)=\max\{\Delta_X(G),\Delta_Y(G)\}$. By construction, $\Delta^+(D)=\Delta_X(G_D)$, $\Delta^-(D)=\Delta_Y(G_D)$, $\delta^+(D)=\delta_X(G_D)$, and $\delta^-(D)=\delta_Y(G_D)$.

Two arcs of $D$ have a common tail or a common head if and only if the corresponding edges of $G_D$ are adjacent. Thus, an arc coloring of $D$ is equivalent to a proper edge coloring of $G_D$. Since $\chi'(G)=\Delta(G)$ for every bipartite graph $G$ by K\"onig's  theorem~\cite{K1916}, the following fact is immediate.

\begin{fact}\label{fact:DtoB}
	Let $D$ be a digraph. Then $\chi'(D)=\chi'(G_D)=\Delta(G_D)=\Delta(D)$.
\end{fact}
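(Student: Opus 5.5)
The plan is to check the three equalities in Fact~\ref{fact:DtoB} one at a time, each coming straight from the construction of $G_D$.

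For $\chi'(D)=\chi'(G_D)$, we use the correspondence $uv\mapsto x_uy_v$. It is a bijection from $A(D)$ to $E(G_D)$, since $D$ is simple and so each ordered pair $(u,v)$ with $uv\in A(D)$ gives exactly one edge. Two arcs share a tail $u$ exactly when their images share the endpoint $x_u$, and they share a head $v$ exactly when their images share $y_v$. Two edges of $G_D$ can only meet in a vertex of $X$ or a vertex of $Y$. Hence a map $\varphi\colon A(D)\to[k]$ is an arc-$k$-coloring of $D$ if and only if the induced map on $E(G_D)$ is an edge-$k$-coloring of $G_D$. Taking the minimum $k$ on both sides gives $\chi'(D)=\chi'(G_D)$.

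For $\chi'(G_D)=\Delta(G_D)$, note that $G_D$ is bipartite with bipartition $(X,Y)$, so K\"onig's theorem~\cite{K1916} applies directly.

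For $\Delta(G_D)=\Delta(D)$, we use the degree identities recorded above. We have $d_{G_D}(x_v)=d^+(v)$ and $d_{G_D}(y_v)=d^-(v)$. Therefore $\Delta_X(G_D)=\Delta^+(D)$ and $\Delta_Y(G_D)=\Delta^-(D)$, and taking maxima gives $\Delta(G_D)=\max\{\Delta^+(D),\Delta^-(D)\}=\Delta(D)$.

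No step is a real obstacle. The only point needing care is the first equality, where we must confirm that adjacency in $G_D$ matches the second-type conflict exactly. It must not also capture first-type conflicts such as $uv,vw$. It does not: $uv\mapsto x_uy_v$ and $vw\mapsto x_vy_w$ share no endpoint.
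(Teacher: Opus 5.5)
Your proposal is correct and matches the paper's argument: the bijection $uv\mapsto x_uy_v$ turns a common tail or head into a shared endpoint in $X$ or $Y$, so arc colorings of $D$ and proper edge colorings of $G_D$ coincide, and then K\"onig's theorem together with $d_{G_D}(x_v)=d^+(v)$ and $d_{G_D}(y_v)=d^-(v)$ gives the remaining equalities. Your check that first-type conflicts such as $uv,vw$ are not captured is a harmless sanity check that the paper leaves implicit.
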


An  edge coloring of a bipartite graph $G[X,Y]$ is a  \emph{partially-vertex-distinguishing} edge coloring or a \emph{pvdec} for short if (i) no two vertices in $X$ have the same color-set and (ii) no two vertices in $Y$ have the same color-set.
A bipartite graph is a \emph{pvdec-bipartite graph} if it has at most one isolated vertex from each part.
The minimum number of colors required for a pvdec of a
pvdec-bipartite graph $G$ is denoted by $\chi'_{pvd}(G)$.
For an integer $d$, let $n_{d}^X$ (resp. $n_{d}^Y$) be the number of vertices of degree $d$ in $X$ (resp. in $Y$). Define
\begin{align*}
	k_p(G) = \min \Big\{ k \in \mathbb{N}\colon  \binom{k}{d}\ge n_{d}^X \text{ for } d\in[\delta_X(G), \Delta_X(G)],\ \binom{k}{d}\ge n_{d}^Y \text{ for } d\in[\delta_Y(G),\Delta_Y(G)]\Big\}.
\end{align*}

Under the above correspondence, an arc coloring $\varphi$ of $D$ is
vertex-distinguishing if and only if the corresponding edge coloring of
$G_D$ is partially vertex-distinguishing. Indeed, the color set of $x_v$
is $\varphi^+(v)$, while the color set of $y_v$ is $\varphi^-(v)$.
Moreover, $k_p(G_D)=k(D)$. Therefore, Theorem~\ref{thm:main} follows
from the following stronger statement, which we prove.

\begin{theorem}\label{thm}
	Let $G$ be a pvdec-bipartite graph. Then $\chi'_{pvd}(G)=k_p(G)$.
\end{theorem}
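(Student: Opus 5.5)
Write $k=k_p(G)$. The lower bound $\chi'_{pvd}(G)\ge k$ is immediate, since vertices of degree $d$ in $X$ need pairwise distinct $d$-subsets of the palette, and likewise in $Y$. For the upper bound, I plan to start from any proper edge-$k$-coloring and repair it by Kempe changes, using an extremal choice. Note that $k\ge \Delta(G)$: if, say, $\Delta_X(G)=d>k$ then $\binom{k}{d}=0<n^X_d$. So Fact~\ref{fact:DtoB}, i.e.\ K\"onig's theorem, gives a proper edge-$k$-coloring $\varphi$ of $G$. The degenerate cases are harmless: when $k=\Delta$ and some degree class is full, distinctness is forced only by the counting, and the argument below does not use slack. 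Isolated vertices have empty color set, and there is at most one per side.

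Among all edge-$k$-colorings, choose $\varphi$ to minimize the number of \emph{conflicting pairs}, meaning pairs of distinct vertices on the same side with equal color sets. Suppose a conflicting pair exists, say $u,v\in X$ with $\varphi(u)=\varphi(v)=S$ and $|S|=d$. Since $\binom{k}{d}\ge n_d^X$ and $u,v$ share a set, some $d$-set $T\subseteq[k]$ is used by no vertex of $X$. I will move $u$ toward $T$ one color at a time. Pick $\alpha\in S\setminus T$ and $\beta\in T\setminus S$. Then $\alpha$ is present and $\beta$ missing at both $u$ and $v$, so each of $u,v$ is an end of an $(\alpha,\beta)$-path. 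In a bipartite graph, an $(\alpha,\beta)$-path starting at a vertex of $X$ with color $\alpha$ ends either in $Y$ with last edge $\alpha$, or in $X$ with last edge $\beta$. So the path $P_u$ from $u$ does not end at $v$, because it would have to arrive at $v$ via a $\beta$-edge. Hence $P_u\ne P_v$.

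Swapping on $P_u$ changes the color set of $u$ to $S-\alpha+\beta$ and the color set of the other endpoint $w$ of $P_u$. Interior vertices keep their sets. This resolves the pair $\{u,v\}$. The danger is that it creates new conflicts involving $u$ or $w$. My strategy is to measure progress lexicographically: first the number of conflicting pairs, then the distance $|\varphi(u)\triangle T|$, with the swap sequence arranged so that $u$ ends exactly at $T$, which is free. Along the way, collisions of $u$ with other vertices can be deferred, because at the end $u$ holds the unused set $T$. I expect the main obstacle to be controlling $w$. If $w\in X$, its set changes by $\beta\to\alpha$ and may collide with something. If $w\in Y$, its set changes on the $Y$ side and may collide there.

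To handle $w$, I will choose among the $(\alpha,\beta)$ choices, or swap on $P_v$ instead of $P_u$, and use a counting or exchange argument. Among the $d$-sets, $u$ and $v$ together with the unused $T$ provide room: if both alternatives create a collision at their endpoints $w_u,w_v$, I will try a second Kempe change on the chain through the new conflicting vertex. The aim is to show that some sequence strictly decreases the potential $\sum_d\sum_{S}(\text{mult}(S)-1)^+$ over both sides. If this local analysis is too weak, the fallback is to strengthen the extremal choice: among colorings with the minimum number of conflicts, pick one minimizing $\sum|\varphi(u)\triangle T|$ over conflicting $u$. Then derive a contradiction by showing that the endpoint $w$ must itself lie in a conflicting pair whose resolution in the opposite direction decreases this secondary quantity.
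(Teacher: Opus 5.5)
You get the preliminaries right: the lower bound, $k\ge\Delta(G)$, the existence of an unused $d$-set $T$, and the parity observation $P_u\neq P_v$. Your extremal choice is also the right one. The number of conflicting pairs equals $\frac12\Phi-\frac12(|X|+|Y|)$ with $\Phi=\sum_S((n_S^X)^2+(n_S^Y)^2)$, so minimizing it is exactly the paper's choice of an optimal coloring.

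The gap is that the argument stops at the obstacle you name yourself, and your remedies do not provide a working step. A single swap on $P_u$ need not reduce conflicts. Suppose its far end $w$ lies in $X$ with color set $R\notin\{S,S^*\}$, where $R^*$ denotes $R$ with $\alpha$ and $\beta$ interchanged. If $R$ is used once and $R^*$ is used $m$ times, the swap creates $m$ new conflicting pairs at $w$, while the move at $u$ removes at most $n^X_S-1$. Walking $u$ toward $T$ by a sequence of swaps has no monotone quantity, because every swap also moves an uncontrolled endpoint that may occupy $T$ or a set $u$ passes through. So neither your lexicographic measure nor the \emph{deferral} of collisions is justified. Your fallback (minimizing $\sum|\varphi(u)\triangle T|$ among colorings with the minimum number of conflicts) needs moves that keep the conflict count minimal, and you have none.

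The missing idea is a controlled way of chaining Kempe changes. Fix one pair $\alpha,\beta$. In each part and for each orbit $\{R,R^*\}$, take a maximum matching between the vertices with color set $R$ and those with color set $R^*$, and link the $(\alpha,\beta)$-paths through these matching edges. The resulting auxiliary multigraph is a disjoint union of paths and cycles. Interchanging $\alpha$ and $\beta$ on all Kempe chains of one path component swaps the sets of every matched pair, with no net change. It alters multiplicities only at the two unmatched ends, each lying in a strict majority class of its pair, so $\Phi$ never increases. This yields the balance property $|n^Z_S-n^Z_{S^*}|\le1$ in every optimal coloring, and a $\Phi$-preserving move whenever $n^Z_S=n^Z_{S^*}+1$.

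Even then no direct decrease is available, and the paper proceeds as follows:
\begin{enumerate}
\item Over all optimal colorings, it chooses a bad set $S$ and a hole $T$ of equal size in the same part at minimum distance $\ell=|S\setminus T|$.
\item It gets $\ell\ge2$ from balance, and shows that the neighbour $Q_1$ of $T$ on a shortest path to $S$ is used exactly once.
\item It applies the $\Phi$-preserving move at that vertex, making $Q_1$ a hole closer to $S$. Minimality then forces the far end of the path to be a vertex with set $S$.
\item It rules this out with a parity lemma for these alternating paths, which generalizes your $P_u\neq P_v$ observation.
\end{enumerate}
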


\begin{proof}
	Every $k$-pvdec of $G$ assigns distinct $d$-element color-sets to the
	vertices of degree $d$ in each part. Hence
	$\binom{k}{d}\geq n_d^X$ and $\binom{k}{d}\geq n_d^Y$ for every relevant
	$d$, and therefore $\chi'_{pvd}(G)\geq k_p(G)$.

	We prove the reverse inequality. Let $k=k_p(G)$. By the definition of
	$k_p(G)$, we have $k\geq\Delta(G)$. Since $G$ is bipartite, K\"onig's
	theorem~\cite{K1916}  implies that $G$ admits an edge-$k$-coloring.
	
	For an  edge-$k$-coloring $f$ of $G$, $Z\in\{X,Y\}$ and $S\subseteq[k]$, let
	\[
	n_S^Z(f)=|\{v\in Z\colon f(v)=S\}|.
	\]
	We call $S$ a \emph{bad color set} in $Z$ if $n_S^Z(f)\geq2$, and a
	\emph{hole} in $Z$ if $n_S^Z(f)=0$.
	
	Among all  edge-$k$-colorings $f$  of $G$, choose one minimizing
	\[
	\Phi(f)=\sum_{S\subseteq[k]}
	\left((n_S^X(f))^2+(n_S^Y(f))^2\right).
	\]
	We call such a coloring \emph{optimal}. We shall prove that an optimal
	coloring has no bad color set.
	
	Fix two distinct colors $\alpha,\beta\in[k]$. For a set
	$S\subseteq[k]$, let $i_{\alpha\beta}(S)$ be the set obtained from $S$
	by interchanging $\alpha$ and $\beta$, and write
	$S^*=i_{\alpha\beta}(S)$. Let
	\[
	[\alpha\diamond\beta]
	=\{S\subseteq[k]\colon |S\cap\{\alpha,\beta\}|=1\}.
	\]
	Thus, $S^*\neq S$ for every $S\in[\alpha\diamond\beta]$.
	
	We define an auxiliary multigraph $H=H(f,\alpha,\beta)$. Its vertex
	set is
	\[
	V(H)=\{v\in V(G)\colon |f(v)\cap\{\alpha,\beta\}|=1\}.
	\]
	Its edges are of the following two types.
	
	\begin{itemize}
		\item[(i)] For every  $(\alpha,\beta)$-path in $G$, add a
		\emph{Kempe edge} joining its two endvertices.
		
		\item[(ii)] For each $Z\in\{X,Y\}$ and each unordered orbit
		$\{S,S^*\}$ of $i_{\alpha\beta}$ on
		$[\alpha\diamond\beta]$, choose a matching of size
		\[
		\min\{n_S^Z(f),n_{S^*}^Z(f)\}
		\]
		between the vertices of $Z$ with color set $S$ and the vertices of
		$Z$ with color set $S^*$, and add its edges as \emph{matching edges}.
	\end{itemize}
	
	Since $|f(v)\cap\{\alpha,\beta\}|=1$ for every $v\in V(H)$, the
	vertex $v$ has degree one in the subgraph of $G$ induced by the edges
	colored $\alpha$ or $\beta$. Hence $v$ is an endvertex of a unique
	  $(\alpha,\beta)$-chain  and is incident with exactly one
	Kempe edge of $H$. Moreover, $v$ is incident with at most one matching
	edge. It follows that every component of $H$ is a path or a cycle, and
	its edges alternate between Kempe edges and matching edges. The
	endvertices of the path components of $H$ are precisely the vertices
	not covered by the chosen matching edges.
	
	Let $P$ be a path component of $H$. The \emph{Kempe exchange along
		$P$} consists of interchanging $\alpha$ and $\beta$ on every
	$(\alpha,\beta)$-path whose Kempe edge belongs to $P$. Let $f'$
	be the resulting   edge-$k$-coloring. Since each vertex of $P$
	is an endvertex of exactly one of the  $(\alpha,\beta)$-paths involved, we have
	\begin{equation}\label{eq:flip}
		f'(v)=
		\begin{cases}
			i_{\alpha\beta}(f(v)), & v\in V(P),\\
			f(v), & v\notin V(P).
		\end{cases}
	\end{equation}
	
	For every matching edge of $P$, its endvertices lie in the same part
	of $G$ and have color sets $R$ and $R^*$ for some
	$R\in[\alpha\diamond\beta]$. By~\eqref{eq:flip}, these two vertices
	exchange their color sets, so their contributions to the color-set
	multiplicities cancel. Since every vertex of $P$ other than its two
	endvertices is incident with a matching edge, the multiplicities
	change only through the two endvertices of $P$.
	
	More precisely, let $u$ and $w$ be the endvertices of $P$, and let
	$Z_u$ and $Z_w$ be the parts containing $u$ and $w$, respectively.
	The exchange decreases $n_{f(u)}^{Z_u}$ and $n_{f(w)}^{Z_w}$ by one
	and increases $n_{f(u)^*}^{Z_u}$ and $n_{f(w)^*}^{Z_w}$ by one, with the changes added together whenever two or more of these terms refer to the same entry.
	We first establish a parity property of the endvertices of $P$.

	\begin{claim}\label{claim:parity-property}
		Let $P$ be a path component of $H(f,\alpha,\beta)$ with endvertices $u$
		and $w$. Then
		\begin{enumerate}[{\rm(i)}]
			\item $f(u)\cap\{\alpha,\beta\}\neq f(w)\cap\{\alpha,\beta\}$ if  $u$ and $w$
			lie in the same part of $G$; and
			\item $f(u)\cap\{\alpha,\beta\}=f(w)\cap\{\alpha,\beta\}$	if  $u$ and $w$   lie in distinct parts of $G$.
		\end{enumerate}

	\end{claim}

	\begin{proof}[Proof of Claim~\ref{claim:parity-property}]
	Write $P=v_0v_1\ldots v_{2r+1}$, where $v_0=u$ and
	$v_{2r+1}=w$. Since the edges of $P$ alternate between Kempe edges
	and matching edges, beginning and ending with Kempe edges, $P$ has
	odd length.
	
	For $v\in V(P)$, define
	\[
	\mu(v)=
	\begin{cases}
		0, & \alpha\in f(v),\\
		1, & \beta\in f(v),
	\end{cases}
	\qquad
	\sigma(v)=
	\begin{cases}
		0, & v\in X,\\
		1, & v\in Y.
	\end{cases}
	\]
	We claim that, for every $i\in[0,2r]$,
	\begin{equation}\label{eq:edge-parity}
		\mu(v_i)+\mu(v_{i+1})
		\equiv
		1+\sigma(v_i)+\sigma(v_{i+1})
		\pmod 2.
	\end{equation}
	
	If $v_iv_{i+1}$ is a matching edge, then $v_i$ and $v_{i+1}$ lie in
	the same part and their color sets are interchanged by
	$i_{\alpha\beta}$. Thus, their $\sigma$-values are equal and their
	$\mu$-values are different, so~\eqref{eq:edge-parity} holds.
	
	Suppose that $v_iv_{i+1}$ is a Kempe edge corresponding to an
	$(\alpha,\beta)$-path $Q$. If $Q$ has odd length, then its
	endvertices lie in distinct parts and are incident with the same color
	from $\{\alpha,\beta\}$. If $Q$ has even length, then its endvertices
	lie in the same part and are incident with different colors from
	$\{\alpha,\beta\}$. Hence~\eqref{eq:edge-parity} also holds for a
	Kempe edge.
	
	Summing~\eqref{eq:edge-parity} over all edges of $P$, the
	contributions of the internal vertices cancel modulo two, and we
	obtain
	\[
	\mu(u)+\mu(w)
	\equiv
	1+\sigma(u)+\sigma(w)
	\pmod 2.
	\]
	The claim follows.
	\end{proof}

	\medskip
	We next prove a balance property of an optimal coloring.
	
	\begin{claim}\label{claim:balance-property}
 	For every $S\in[\alpha\diamond\beta]$, we have
 \[
 |n_S^X(f)-n_{S^*}^X(f)|\leq1
 \qquad\text{and}\qquad
 |n_S^Y(f)-n_{S^*}^Y(f)|\leq1.
 \]
\end{claim}

\begin{proof}[Proof of Claim~\ref{claim:balance-property}]
By symmetry, it suffices to prove
$n_S^X(f)\leq n_{S^*}^X(f)+1$. Suppose instead that
$n_S^X(f)\geq n_{S^*}^X(f)+2$.
The chosen matching between the two classes leaves at least two
vertices of $X$ with color set $S$ unmatched. Let $u$ be one of them,
let $P$ be the path component of $H$ containing $u$, and let $w$ be
the other endvertex of $P$. Let $Z\in\{X,Y\}$ be the part containing
$w$.

We first show that the changes at $u$ and $w$ affect distinct entries in the definition of $\Phi$. Suppose first that $w\in X$. Then $f(w)\notin\{S,S^*\}$. Indeed, Claim~\ref{claim:parity-property} gives $f(w)\neq S$. Moreover, since $n_S^X(f)>n_{S^*}^X(f)$, every vertex of $X$ with color set $S^*$ is covered by a matching edge, whereas $w$ is unmatched. Hence $f(w)\neq S^*$. It follows also that $f(w)^*\notin\{S,S^*\}$. Thus, the entries affected at $w$ are distinct from the entries $n_S^X$ and $n_{S^*}^X$ affected at $u$. If $w\in Y$, then the entries affected at $u$ and $w$ correspond to different parts and are therefore distinct.

Let $f'$ be obtained from $f$ by the Kempe exchange along $P$. Then the contributions of the two endvertices to $\Phi(f')-\Phi(f)$ can be computed separately. At
$u$, one occurrence is transferred from $S$ to $S^*$, contributing
\begin{align*}
	&\big(n_S^X(f)-1\big)^2+\big(n_{S^*}^X(f)+1\big)^2
	-\big(n_S^X(f)\big)^2-\big(n_{S^*}^X(f)\big)^2\\
	&\hspace{25mm}
	=2\big(n_{S^*}^X(f)-n_S^X(f)\big)+2
	\leq-2
\end{align*}
to $\Phi(f')-\Phi(f)$.

Write $R=f(w)$. Since $w$ is unmatched, $R$ is a majority class in
the pair $\{R,R^*\}$ in the part $Z$. Hence
$n_R^Z(f)\geq n_{R^*}^Z(f)+1$, and the contribution at $w$ is
\[
2\big(n_{R^*}^Z(f)-n_R^Z(f)\big)+2\leq0.
\]
Consequently, $\Phi(f')<\Phi(f)$, contradicting the optimality of
$f$. This proves Claim~\ref{claim:balance-property}.
\end{proof}

	\medskip
	We shall also use the following consequence.

	\begin{claim}\label{claim:multi-change}
	Suppose that $S\in[\alpha\diamond\beta]$ satisfies
	$n_S^X(f)=n_{S^*}^X(f)+1$.
	Let $u$ be the unique unmatched vertex of $X$ with $f(u)=S$, let $P$
	be the path component of $H(f,\alpha,\beta)$ containing $u$, and let
	$w$ be the other endvertex of $P$. Then the coloring $f'$ obtained by
	the Kempe exchange along $P$ is optimal. Moreover, if $Z$ is the part
	containing $w$, then the color-set multiplicities change by
	transferring one occurrence from $S$ to $S^*$ in $X$ and one
	occurrence from $f(w)$ to $f(w)^*$ in $Z$.
	\end{claim}

 \begin{proof}[Proof of Claim~\ref{claim:multi-change}]
 	By the same argument as in the proof of Claim~\ref{claim:balance-property},
 the changes at $u$ and $w$ affect distinct entries in the definition of $\Phi$.
 The contribution from the pair $\{S,S^*\}$ in $X$ is $2\big(n_{S^*}^X(f)-n_S^X(f)\big)+2=0$.
 Write $R=f(w)$. Since $w$ is unmatched, we have
 $n_R^Z(f)\geq n_{R^*}^Z(f)+1$. By Claim~\ref{claim:balance-property}, equality holds. Hence the
 contribution from the pair $\{R,R^*\}$ in $Z$ is also zero. Therefore,
 $\Phi(f')=\Phi(f)$, so $f'$ is optimal. The assertion concerning the
 multiplicities follows from the endpoint bookkeeping above.
 \end{proof}

	We now complete the proof of Theorem~\ref{thm}. Suppose, to the contrary, that every
	optimal edge-$k$-coloring has a bad color set.
	Let $J(k,d)$ be the Johnson graph whose vertices are the $d$-subsets of
	$[k]$, where two sets are adjacent if one can be obtained from the other
	by replacing one color. The graph $J(k,d)$ is connected, since any
	$d$-subset can be transformed into any other $d$-subset by replacing, one
	at a time, the colors that belong to the first set but not to the second.

	Among all optimal  edge-$k$-colorings $f$, all parts
	$Z\in\{X,Y\}$, all integers $d$, all bad color sets
	$S\in\binom{[k]}{d}$ in $Z$, and all holes
	$T\in\binom{[k]}{d}$ in $Z$, choose $(f,Z,d,S,T)$ so that
	\[
	\ell=\operatorname{dist}_{J(k,d)}(S,T)
	\]
	is minimum, where $\operatorname{dist}_{J(k,d)}(S,T)=|S\setminus T|$. Such a hole exists because
	$n_d^Z\leq\binom{k}{d}$, while some $d$-element color-set occurs at
	least twice in $Z$. By symmetry, we may assume that $Z=X$.
	
	We have $\ell\geq2$. Indeed, if $\ell=1$, then there are distinct
	colors $\alpha,\beta$ such that $T=S^*$. Since $S$ is bad and $T$ is
	a hole, we have
	$n_S^X(f)\geq2$ and $n_{S^*}^X(f)=0$, contradicting Claim~\ref{claim:balance-property}.
	
	Let $Q_0Q_1\ldots Q_\ell$ be a shortest path in $J(k,d)$ from
	$Q_0=T$ to $Q_\ell=S$. Since
	$\operatorname{dist}_{J(k,d)}(Q_1,Q_\ell)=\ell-1$, the first step
	must remove an element of $Q_0\setminus Q_\ell$ and add an element of
	$Q_\ell\setminus Q_0$. Hence there exist
	$\alpha\in Q_0\setminus Q_\ell$ and
	$\beta\in Q_\ell\setminus Q_0$ such that $Q_1=(Q_0\setminus\{\alpha\})\cup\{\beta\}$.
	Consequently, $Q_0=Q_1^*$ and
	\begin{equation}\label{eq:trace}
		Q_1\cap\{\alpha,\beta\}
		=
		Q_\ell\cap\{\alpha,\beta\}
		=
		\{\beta\}.
	\end{equation}
	
	The set $Q_1$ is not a hole, since otherwise it would be a hole at
	distance $\ell-1$ from $Q_\ell$, contradicting the choice of
	$\ell$. By Claim~\ref{claim:balance-property}, $n_{Q_1}^X(f)\leq n_{Q_0}^X(f)+1=1$.
	Hence $n_{Q_1}^X(f)=1$.
	
	Let $u$ be the unique vertex of $X$ with $f(u)=Q_1$. Since
	$n_{Q_1}^X(f)=n_{Q_0}^X(f)+1$, the vertex $u$ is the unique unmatched
	vertex of $X$ with color set $Q_1$ in
	$H(f,\alpha,\beta)$. Let $P$ be the path component of $H$ containing
	$u$, and let $w$ be its other endvertex. Let $f'$ be obtained from
	$f$ by the Kempe exchange along $P$. By Claim~\ref{claim:multi-change}, $f'$ is optimal and $n_{Q_1}^X(f')=0, n_{Q_0}^X(f')=1$.
	In particular, $Q_1$ is a hole in $X$ under $f'$.
	
	If $Q_\ell$ remains bad in $X$ under $f'$, then $Q_1$ is a hole at
	distance $\ell-1$ from $Q_\ell$, contradicting the minimality of
	$\ell$. Therefore, $n_{Q_\ell}^X(f')\leq1$.
	On the other hand, since $Q_\ell$ is bad in $X$ under $f$,
$n_{Q_\ell}^X(f)\geq2$.
	Consequently, $n_{Q_\ell}^X(f')<n_{Q_\ell}^X(f)$.
	Since $\ell\geq2$, we have
	$Q_\ell\notin\{Q_0,Q_1\}$. Thus, the change at $u$ from $Q_1$ to
	$Q_0$ does not affect $n_{Q_\ell}^X$. By
	Claim~\ref{claim:multi-change}, the decrease in $n_{Q_\ell}^X$ must
	therefore be caused by the other endvertex $w$ of $P$. It follows that
	$w\in X$ and $f(w)=Q_\ell$. Moreover, the change at $w$ decreases this
	multiplicity by exactly one. Hence
	\[
	n_{Q_\ell}^X(f)=2,
	\qquad
	n_{Q_\ell}^X(f')=1,
	\qquad
	w\in X,
	\qquad
	f(w)=Q_\ell.
	\]
	
	Thus, the two endvertices $u$ and $w$ of $P$ both lie in $X$. By
	Claim~\ref{claim:parity-property}, $	f(u)\cap\{\alpha,\beta\}
	\neq f(w)\cap\{\alpha,\beta\}$.
	However, $f(u)=Q_1$ and $f(w)=Q_\ell$, contradicting
	\eqref{eq:trace}.
	
	This contradiction shows that some optimal   edge-$k$-coloring
	has no bad color set in either part. Hence no two vertices in $X$
	have the same color set and no two vertices in $Y$ have the same
	color set. Therefore, this coloring is a $k$-pvdec of $G$, and
	$\chi'_{pvd}(G)\leq k=k_p(G)$. Together with the lower bound, we
	obtain $\chi'_{pvd}(G)=k_p(G)$.
\end{proof}

\section*{Acknowledgements}

We thank Weihua He of Guangdong University of Technology for bringing this problem to our attention.

\bibliographystyle{abbrv}
\bibliography{vdac}

\end{document}